\newtheorem{thm}{Theorem}
\newtheorem{prop}[thm]{Proposition}
\newtheorem{lemma}[thm]{Lemma}
\newtheorem{cor}[thm]{Corollary}
\newtheorem{Example}[thm]{Example}
\newenvironment{example}[1][]{\begin{Example}[#1]\rm}{\end{Example}}
\theoremstyle{definition}
\newtheorem{defn}[thm]{Definition}
\newtheorem{rmk}[thm]{Remark}
\newcommand{\spt}{\mathsf{SPT}(G)}
\newcommand{\gpf}{\mathsf{PF}(G)}
\newcommand{\dfs}{\mathrm{DFS}}
\newcommand{\NN}{\mathbb{N}}
\newcommand{\bv}{\texttt{burnt\_vertices}}
\newcommand{\damp}{\texttt{dampened\_edges}}
\newcommand{\tree}{\texttt{tree\_edges}}
\newcommand{\pf}{\mathcal{P}}
\title{$G$-parking functions and tree inversions}
\author{David Perkinson}
\email{davidp@reed.edu}
\author{Qiaoyu Yang}
\email{yangq@reed.edu}
\author{Kuai Yu}
\email{kyu@reed.edu}
\address{Reed College, Portland OR, 97202}
\begin{document}

\begin{abstract} A depth-first search version of Dhar's burning algorithm is
  used to give a bijection between the parking functions of a graph and labeled
  spanning trees, relating the degree of the parking function with the number of
  inversions of the spanning tree.  Specializing to the complete graph 
  solves a problem posed by R.\ Stanley.
\end{abstract}

\maketitle

\section{Introduction}
Let $G=(V,E)$ be a connected simple graph with vertex set $V=\{0,\dots,n\}$ and
edge set $E$.  Fix a {\em root} vertex $r\in V$ and let $\spt$ denote the set of
spanning trees of $G$ rooted at $r$.  We think of each element of $\spt$ as a
directed graph in which all paths lead away from the root.  If $i,j\in V$ and
$i$ lies on the unique path from $r$ to $j$ in the rooted spanning tree $T$,
then $i$ is an {\em ancestor} of~$j$ and $j$ is a {\em descendant} of~$i$ in
$T$.  If, in addition, there are no vertices between $i$ and $j$ on the path
from the root, then $i$ is the {\em parent} of its {\em child}~$j$, and $(i,j)$
is a directed edge of $T$.   

\begin{defn} An {\em inversion} of $T\in\spt$ is a pair of vertices $(i,j)$,
  such that $i$ is an ancestor of~$j$ in $T$ and $i>j$.  It is a {\em
  $\kappa$-inversion} if, in addition, $i$ is not the root and $i$'s parent is
  adjacent to~$j$ in $G$.  The number of $\kappa$-inversions of $T$ is the
  tree's  {\em $\kappa$-number}, denoted $\kappa(G,T)$.
\end{defn}
\begin{defn}\label{parking-function}
  A {\em parking function} for $G$ (with respect to the vertex $r$) is a
  function 
  \[
    \pf\colon V\setminus\{r\}\to \NN
  \]
  such that for every nonempty set $S \subseteq V\setminus\{r\}$, there exists
  $i\in S$ such that $\pf(i)<\deg_{S^c}(i)$, where $\deg_{S^c}(i)$ is the number
  of edges $\{i,j\}$ of $G$ with $j\notin S$ (including the possibility $j=r$).
  The {\em degree} of a parking function $\pf$ is
\[
  \deg\pf := \sum_{i\in V\setminus\{r\}}\pf(i).
\]
  
  The set of parking functions for $G$ is denoted by $\gpf$.  
\end{defn} 

In this work, we introduce the {\em $\dfs$-burning algorithm}. It is a melding 
of depth-first search with Dhar's burning algorithm~\cite{Dhar} from the Abelian sandpile
model, assigning a spanning tree to each parking function for
$G$.  Our main result is:
\begin{thm}\label{main} The DFS-burning algorithm (Algorithm~\ref{dfs-burning}) gives a
  bijection $\phi\colon\gpf\to\spt$ such that
  \[
    \kappa(G,\phi(\pf)) = g - \deg{\pf}
  \]
  where $g:=|E|-|V|+1$ is the {\em circuit rank}\footnote{In the theory of
    parking functions, the circuit rank is often called the genus due to
    the role it plays in the Riemann-Roch theorem for graphs,
    \cite{BakerNorine}.} of $G$.  The inverse to $\phi$ is given by
    Algorithm~\ref{inverse}.
\end{thm}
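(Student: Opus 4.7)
The plan has two parts: first, show that $\phi\colon\gpf\to\spt$ is a bijection with inverse Algorithm~\ref{inverse}; second, establish the identity $\kappa(G,\phi(\pf))=g-\deg\pf$. For the bijection, I would invoke Dhar's burning criterion, which says $\pf\in\gpf$ iff the burning process starting at $r$ eventually consumes all of $V$. Since DFS-burning is a depth-first refinement of Dhar's algorithm, termination on any $\pf\in\gpf$ is guaranteed, the $|V|-1$ discovery edges form a spanning tree rooted at $r$, and so $\phi$ is well-defined.

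For the inverse direction, I would apply Algorithm~\ref{inverse} to an arbitrary $T\in\spt$ to produce a candidate $\pf$. Two things then need checking: (a) this $\pf$ is actually a parking function, which should follow because the DFS order along $T$ provides, for any nonempty $S\subseteq V\setminus\{r\}$, a witness vertex $i\in S$ satisfying $\pf(i)<\deg_{S^c}(i)$; and (b) DFS-burning applied to $\pf$ replays the same DFS of $T$ and returns $T$. Together these show that $\phi$ and Algorithm~\ref{inverse} are two-sided inverses.

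The combinatorial heart is the $\kappa$-identity. The $g$ non-tree edges of $T=\phi(\pf)$ are exactly the dampened edges produced by DFS-burning, and I would partition them into a subset of size $\deg\pf$ whose members contribute to parking-function values and a subset of size $\kappa(G,T)$ in bijection with the $\kappa$-inversions of $T$. The natural split is by DFS orientation: a dampened edge $\{u,v\}$ encountered while $v$ is still becoming burnable contributes $+1$ to $\pf(v)$, while one encountered from an already-burnt $v$ back into another branch of $T$ corresponds to a $\kappa$-inversion. The main obstacle I anticipate is that the $\kappa$-condition refers to the \emph{parent} of the higher-numbered endpoint (not the endpoint itself) being adjacent to the lower one; extracting this exact condition from the DFS recursion structure will require careful bookkeeping, and I would cross-check against the complete-graph specialization, which recovers the problem posed by Stanley.
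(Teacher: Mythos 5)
Your plan has the right architecture, and the second half sketches a route genuinely different from the paper's, but there are two concrete problems. First, in the bijection part, checking (a) that Algorithm~\ref{inverse} applied to $T$ yields a parking function $\pf_T$ and (b) that DFS-burning on $\pf_T$ replays the search and returns $T$ establishes only $\phi(\pf_T)=T$ for all $T\in\spt$, i.e., that $\phi$ is surjective and the tree-to-function map is injective. It does \emph{not} show the other composition: a priori some parking function outside the image of Algorithm~\ref{inverse} could map under $\phi$ to the same tree as some $\pf_T$, so ``two-sided inverses'' does not yet follow. The paper closes exactly this hole in Theorem~\ref{dfs-bijection} by proving $\phi$ injective directly: it records each run as an ordered list $L(\pf)$ of directed edges marked ``tree'' or ``dampened,'' and argues that for $\pf\neq\pf'$ the first entry where $L(\pf)$ and $L(\pf')$ differ is an edge marked tree in one list and dampened in the other, so $\phi(\pf)\neq\phi(\pf')$. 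You need this (or a cardinality argument $|\gpf|=|\spt|$, which you do not invoke) to finish. Incidentally, the same list device is how the paper proves your step (b), and your appeal to Dhar's criterion for well-definedness quietly uses order-independence of burning, which the paper avoids by proving the certificate property ($\pf(j)\geq\deg_{S^c}(j)$ on the unburnt set $S$) from scratch.

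Second, your opening claim for the $\kappa$-identity --- that the $g$ non-tree edges of $T=\phi(\pf)$ ``are exactly the dampened edges'' --- is false: the dampened edges number $\deg\pf$, which is generally less than $g$. What your partition actually requires is that the non-tree edges split into the $\deg\pf$ dampened edges and the $g-\deg\pf$ edges that are merely \emph{skipped} because the for-loop reaches their second endpoint after it is already burnt, together with a bijection between skipped edges and $\kappa$-inversions: a skipped edge $\{u,j\}$ with $u$ burnt first corresponds to the pair $(i,j)$ where $i$ is the child of $u$ in whose recursive call $j$ burned; since $u$'s loop processed $i$ before $j$, necessarily $i>j$, and $i$'s parent $u$ is adjacent to $j$, so $(i,j)$ is a $\kappa$-inversion. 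This can be carried out and would give a direct, one-shot proof on the fixed graph $G$ --- genuinely different from the paper, which instead deletes the dampened edges $e_1,\dots,e_k$ from $G$ one at a time, shows each $e_\ell$ lies in the DFS tree of the current graph $G_{\ell-1}$, and applies Lemma~\ref{main lemma} inductively ($\kappa$ drops by one with each deletion, and the $\kappa$-inversions are unchanged). Note that the second half of the paper's Lemma~\ref{main lemma} is precisely your skipped-edge bijection in the special case $\pf=0$, where no edges are dampened; your plan would generalize that bijection to arbitrary $\pf$ and dispense with the deletion induction. As written, however, both the corrected edge classification and the parent-adjacency bookkeeping --- which you yourself flag as the main obstacle --- are left unproved, so the combinatorial heart of the theorem is still open in your proposal.
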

The reader is encouraged to refer to Figure~\ref{house graph} for an example.  

\subsection{Background.}
Parking functions were originally defined in a study of hashing techniques in
computer science,~\cite{Konheim}, phrased in terms of a problem involving
preferences of drivers for parking spaces.  Implicit in that definition is the
restriction to complete graphs.  Parking functions for general graphs have
appeared in a variety of guises: in the Riemann-Roch theory for graphs, parking
functions are known as {\em reduced divisors},~\cite{BakerNorine}; in the
context of chip-firing games, they are known as {\em superstable}
configurations,~\cite{Holroyd}.  Superstables are directly related to the {\em
recurrent} configurations for Dhar's Abelian sandpile model on the
graph,~\cite[Thm.~4.4]{Holroyd},~\cite{Dhar}, and thus to the set of {\em
critical} configurations in Biggs' dollar game,~\cite{Biggs}, and they serve as
representatives for Lorenzini's {\em group of
components},~\cite{Lorenzini},~\cite{Lorenzini2}.  The name ``$G$-parking
function'' was introduced by Postnikov and Shapiro in~\cite{Postnikov}. Parking
functions in the case of a complete graph have appeared in the theory of
symmetric functions,~\cite{Haiman},~\cite{Novelli}, and hyperplane
arrangements,~\cite{Stanley} (for the latter, see~\cite{Hopkins} for an
extension to more general graphs).

The {\em Tutte polynomial} for a simple graph $G$ is
\begin{equation*}
  T(G,x,y)= \sum_{A\subseteq E}(x-1)^{c(A)-c(E)}(y-1)^{c(A)+|A|-|V|}
\end{equation*}
where $c(A)$ is the number of connected components of the subgraph of $G$ with
vertex set $V$ and edge set $A$.  (For our purposes, we assume $G$ is
connected, so  $c(E)=1$.) Translating the work of Merino,~\cite{Merino}, into
the language of parking functions, $T(1,y)=\sum_{i=0}^{g}a_i\,y^i$ where
$g-a_i$ is the number of parking functions for $G$ of degree $d$ and $g$ is the
circuit rank of $G$.  (Hence, $y^g\,T(1,1/y)$ is the generating function for
the parking functions by degree.)  The definition of a $\kappa$-inversion is
due to Gessel,~\cite{Gessel:Decom}, where it is introduced for the purpose of
showing that $a_i$ is the number of spanning trees of $G$ with $\kappa$-number
$i$.  Theorem~\ref{main} may be regarded as an explanation of the coincidence. 

Inversions and $\kappa$-inversions are the same when $G$ is a complete graph
(or, more generally, if $G$ is a properly labeled {\em threshold graph} as
explained in Section~\ref{threshold graphs}).  In earlier work,
Kreweras,~\cite{Kreweras}, had already noticed that for a complete graph the
number of trees with inversion number $a$ equals the number of parking functions
of degree $g-a$.  Stanley,~\cite{Stanley}, presents this result and poses the
problem of finding a corresponding explicit bijection for the complete graph,
$K_n$, that does not depend on recursing through bijections for $K_i$ for $i<
n$,~\cite[Chapter 6, Exercise 4]{Stanley}.  This problem was the motivation for
our work and is generalized and solved by Theorem~\ref{main}.  Note that
although the algorithms of Theorem~\ref{main} use recursion, they recurse only
through the vertices of a fixed graph.  The runtime is $O(|V|+|E|)$, as it is
for the usual depth-first search of a graph (and it would be a standard exercise
to rewrite the algorithm using a stack and avoiding recursion without changing
the runtime).

We were influenced by~\cite{BakerShokrieh}, which gives an exposition of the
work of Cori and Le Borne in~\cite{ClB}.  They describe an algorithm that gives a
bijection between parking functions and spanning trees in which a parking
function of degree $d$ is assigned a tree with external activity $g-d$.  We are
indebted to Farbod Shokrieh for explaining this work to us at the American
Institute of Mathematics workshop on {\em Generalizations of chip-firing and the
critical group}, July 2013.  See~\cite{Beissinger} for a recursively defined
permutation of labeled trees relating external activity to inversions.
While preparing this manuscript we became aware of the work of
Shin,~\cite{Shin}, later subsumed in a paper by de Oliveira and Las
Vergnas,~\cite{deOliveira}, in which Stanley's problem had previously been
solved.  Roughly, depth-first search is used to give a bijection between
permutations and trees with no inversions.  On the complete graph, permutations
may be thought of as maximal-degree parking functions.  To extend the bijection
to arbitrary parking functions on the complete graph, a procedure is given for
relabeling.  The advantages of the bijection of Theorem~\ref{main} are (i) it
applies to arbitrary (simple, connected, labeled) graphs, not only to complete
graphs, and (ii) the algorithms providing the bijection and its inverse are less
complicated, using little more than depth-first search, avoiding re-writing
rules.  We note that restricting our bijection to the case of complete graphs
gives a substantially different bijection from that of~\cite{deOliveira}
or~\cite{Shin}. 

Gessel and Sagan, \cite{GesselSagan}, defines the {\em neighbors-first search
(NFS)} of a graph and characterizes $\kappa$-inversions of a spanning tree in
terms of edges that are {\em externally active with respect to NFS},
(Theorem~6.4,~\cite{GesselSagan}).  The authors use NFS to describe a bijection
between spanning trees and parking functions on complete graphs.  The
bijection relates the number of $\kappa$-inversions to a statistic on parking
functions---thought of as hash functions---measuring the number of ``probes''
performed by a corresponding search protocol.

\subsection{Organization.} In Section~\ref{main section}, we describe
and verify the algorithms providing the bijection, then prove
Theorem~\ref{main}.  Section~\ref{threshold graphs} considers {\em threshold
graphs}, a class of graphs including the complete graphs.
Proposition~\ref{threshold-inversion} in that section shows that for
spanning trees of (suitably labeled) threshold graphs, every inversion is a
$\kappa$-inversion.

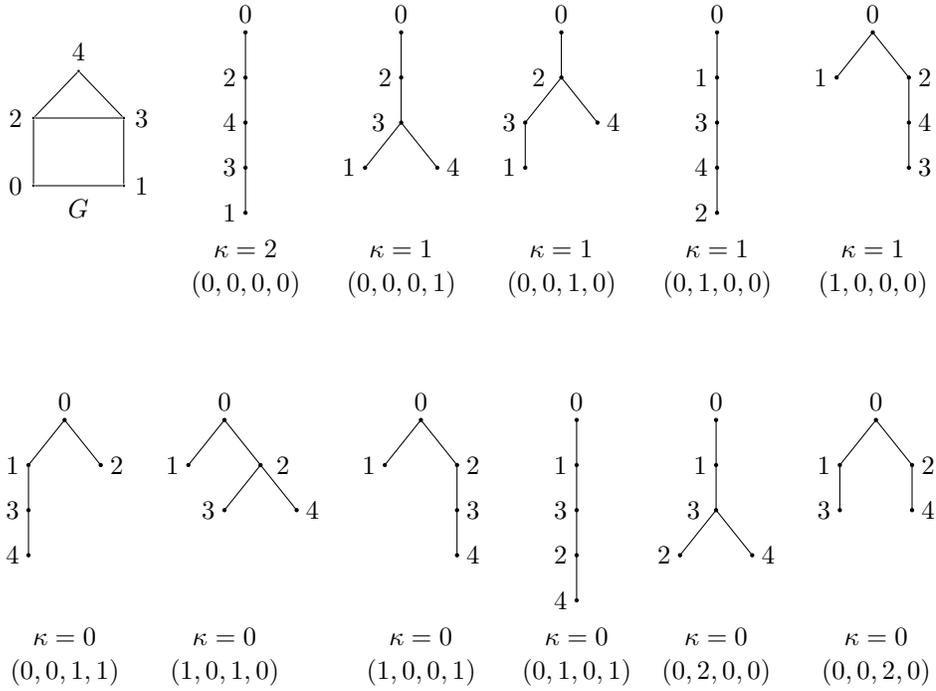
\begin{figure}[ht]
\begin{tikzpicture}[my_node/.style={fill, circle, inner sep=0pt,minimum size=1pt}, scale=0.6]
  \def\rad{1pt}
  \begin{scope}[shift={(0,3)},scale=0.5]
\node[my_node,above,label=above:$4$] (v4) at (0,5) {};
\node[my_node,label=right:$3$] (v3) at (2,3) {};
\node[my_node,label=left:$2$] (v2) at (-2,3) {};
\node[my_node,label=right:$1$] (v1) at (2,0) {};
\node[my_node,label=left:$0$] (v0) at (-2,0) {};
\draw (v0)--(v1)--(v3)--(v2)--(v4)--(v3);
\draw (v0)--(v2);
\draw (0,-1) node {$G$};
\draw (0,-5) node {};
\end{scope}
\end{tikzpicture}
\hspace{1mm}
\begin{tikzpicture}[scale=0.6]
  \def\rad{1pt}
  \begin{scope}[shift={(0,0)}]
  \draw[fill] (0,0) circle [radius=\rad];
  \draw[fill] (0,-1) circle [radius=\rad];
  \draw[fill] (0,-2) circle [radius=\rad];
  \draw[fill] (0,-3) circle [radius=\rad];
  \draw[fill] (0,-4) circle [radius=\rad];
  \draw (0,0) node[above]{$0$} --
        (0,-1) node[left]{$2$} --
        (0,-2) node[left]{$4$} --
        (0,-3) node[left]{$3$} --
        (0,-4) node[left]{$1$};
  \node at (0,-4.8){$\kappa=2$};
  \node  at (0,-5.6){$(0,0,0,0)$};
  \end{scope}
\end{tikzpicture}
\hspace{1mm}
\begin{tikzpicture}[scale=0.6]
  \def\rad{1pt}
  \begin{scope}[shift={(0,0)}]
  \draw[fill] (0,0) circle [radius=\rad];
  \draw[fill] (0,-1) circle [radius=\rad];
  \draw[fill] (0,-2) circle [radius=\rad];
  \draw[fill] (-0.8,-3) circle [radius=\rad];
  \draw[fill] (0.8,-3) circle [radius=\rad];
  \draw	(-0.5,-2) node {$3$};
  \draw (0,0) node[above]{$0$} --
        (0,-1) node[left]{$2$} --
	(0,-2)  --
        (-0.8,-3) node[left]{$1$}
        (0,-2)--(0.8,-3) node[right]{$4$};
  \node at (0,-4.8){$\kappa=1$};
  \node  at (0,-5.6){$(0,0,0,1)$};
  \end{scope}
\end{tikzpicture}
\hspace{1mm}
\begin{tikzpicture}[scale=0.6]
  \def\rad{1pt}
  \begin{scope}[shift={(0,0)}]
  \draw[fill] (0,0) circle [radius=\rad];
  \draw[fill] (0,-1) circle [radius=\rad];
  \draw[fill] (-0.8,-2) circle [radius=\rad];
  \draw[fill] (-0.8,-3) circle [radius=\rad];
  \draw[fill] (0.8,-2) circle [radius=\rad];
  \draw (-0.5,-1) node {$2$};
  \draw (0,0) node[above]{$0$} -- (0,-1) 
  -- (0.8,-2) node[right]{$4$} (0,-1) 
  -- (-0.8,-2) node[left]{$3$} (-0.8,-2) 
  -- (-0.8,-3) node[left]{$1$}; \node at (0,-4.8){$\kappa=1$};
  \node  at (0,-5.6){$(0,0,1,0)$};
  \end{scope}
\end{tikzpicture}
\hspace{1 mm}
\begin{tikzpicture}[scale=0.6]
  \def\rad{1pt}
  \begin{scope}[shift={(0,0)}]
  \draw[fill] (0,0) circle [radius=\rad];
  \draw[fill] (0,-1) circle [radius=\rad];
  \draw[fill] (0,-2) circle [radius=\rad];
  \draw[fill] (0,-3) circle [radius=\rad];
  \draw[fill] (0,-4) circle [radius=\rad];
  \draw (0,0) node[above]{$0$} --
        (0,-1) node[left]{$1$} --
        (0,-2) node[left]{$3$} --
        (0,-3) node[left]{$4$} --
        (0,-4) node[left]{$2$};
  \node at (0,-4.8){$\kappa=1$};
  \node  at (0,-5.6){$(0,1,0,0)$};
  \end{scope}
\end{tikzpicture}
\hspace{1 mm}
\begin{tikzpicture}[scale=0.6]
  \def\rad{1pt}
  \begin{scope}[shift={(0,0)}]
  \draw[fill] (0,0) circle [radius=\rad];
  \draw[fill] (-0.8,-1) circle [radius=\rad];
  \draw[fill] (0.8,-1) circle [radius=\rad];
  \draw[fill] (0.8,-2) circle [radius=\rad];
  \draw[fill] (0.8,-3) circle [radius=\rad];
  \draw (0,0) node[above]{$0$} --
        (0.8,-1) node[right]{$2$} -- (0.8,-2) node[right]{$4$} -- (0.8,-3) node[right]{$3$}
        (0,0)  -- (-0.8,-1) node[left]{$1$};
  \node at (0,-4.8){$\kappa=1$};
  \node  at (0,-5.6){$(1,0,0,0)$};
  \end{scope}
\end{tikzpicture}
\vskip10mm
\begin{tikzpicture}[scale=0.6]
  \def\rad{1pt}
  \begin{scope}[shift={(0,0)}]
  \draw[fill] (0,0) circle [radius=\rad];
  \draw[fill] (-0.8,-1) circle [radius=\rad];
  \draw[fill] (-0.8,-2) circle [radius=\rad];
  \draw[fill] (-0.8,-3) circle [radius=\rad];
  \draw[fill] (0.8,-1) circle [radius=\rad];
  \draw (0,0) node[above]{$0$} --
        (-0.8,-1) node[left]{$1$} --
        (-0.8,-2) node[left]{$3$} --
        (-0.8,-3) node[left]{$4$}
        (0,0)  -- (0.8,-1) node[right]{$2$};
  \node at (0,-4.8){$\kappa=0$};
  \node  at (0,-5.6){$(0,0,1,1)$};
  \end{scope}
\end{tikzpicture}
\hspace{1 mm}
\begin{tikzpicture}[scale=0.6]
  \def\rad{1pt}
  \begin{scope}[shift={(0,0)}]
  \draw[fill] (0,0) circle [radius=\rad];
  \draw[fill] (-0.8,-1) circle [radius=\rad];
  \draw[fill] (0.8,-1) circle [radius=\rad];
  \draw[fill] (0,-2) circle [radius=\rad];
  \draw[fill] (1.6,-2) circle [radius=\rad];
  \draw (1.3,-1) node {$2$};
  \draw (0,0) node[above]{$0$} -- (-0.8,-1) node[left]{$1$}
  (0,0)  -- (0.8,-1)
	-- (0,-2) node[left]{$3$}
        (0.8,-1)  -- (1.6,-2) node[right]{$4$};
  \node at (0,-4.8){$\kappa=0$};
  \node  at (0,-5.6){$(1,0,1,0)$};
  \end{scope}
\end{tikzpicture}
\hspace{1 mm}
\begin{tikzpicture}[scale=0.6]
  \def\rad{1pt}
  \begin{scope}[shift={(0,0)}]
  \draw[fill] (0,0) circle [radius=\rad];
  \draw[fill] (-0.8,-1) circle [radius=\rad];
  \draw[fill] (0.8,-1) circle [radius=\rad];
  \draw[fill] (0.8,-2) circle [radius=\rad];
  \draw[fill] (0.8,-3) circle [radius=\rad];
  \draw (0,0) node[above]{$0$} -- (-0.8,-1) node[left]{$1$}
        (0,0) -- (0.8,-1) node[right]{$2$} --
        (0.8,-2) node[right]{$3$} -- (0.8,-3) node[right]{$4$};
  \node at (0,-4.8){$\kappa=0$};
  \node  at (0,-5.6){$(1,0,0,1)$};
  \end{scope}
\end{tikzpicture}
\hspace{1 mm}
\begin{tikzpicture}[scale=0.6]
  \def\rad{1pt}
  \begin{scope}[shift={(0,0)}]
  \draw[fill] (0,0) circle [radius=\rad];
  \draw[fill] (0,-1) circle [radius=\rad];
  \draw[fill] (0,-2) circle [radius=\rad];
  \draw[fill] (0,-3) circle [radius=\rad];
  \draw[fill] (0,-4) circle [radius=\rad];
  \draw (0,0) node[above]{$0$} --
        (0,-1) node[left]{$1$} --
        (0,-2) node[left]{$3$} --
        (0,-3) node[left]{$2$} --
        (0,-4) node[left]{$4$};
  \node at (0,-4.8){$\kappa=0$};
  \node  at (0,-5.6){$(0,1,0,1)$};
  \end{scope}
\end{tikzpicture}
\begin{tikzpicture}[scale=0.6]
  \def\rad{1pt}
  \begin{scope}[shift={(0,0)}]
  \draw[fill] (0,0) circle [radius=\rad];
  \draw[fill] (0,-1) circle [radius=\rad];
  \draw[fill] (0,-2) circle [radius=\rad];
  \draw[fill] (-0.8,-3) circle [radius=\rad];
  \draw[fill] (0.8,-3) circle [radius=\rad];
  \draw (-0.5,-2) node {$3$};
  \draw (0,0) node[above]{$0$} -- (0,-1) node[left]{$1$} -- 
  (0,-2) 
  -- (-0.8,-3) node[left]{$2$}
        (0,-2)  -- (0.8,-3) node[right]{$4$};
  \node at (0,-4.8){$\kappa=0$};
  \node  at (0,-5.6){$(0,2,0,0)$};
  \end{scope}
\end{tikzpicture}
\hspace{1 mm}
\begin{tikzpicture}[scale=0.6]
  \def\rad{1pt}
  \begin{scope}[shift={(0,0)}]
  \draw[fill] (0,0) circle [radius=\rad];
  \draw[fill] (-0.8,-1) circle [radius=\rad];
  \draw[fill] (-0.8,-2) circle [radius=\rad];
  \draw[fill] (0.8,-1) circle [radius=\rad];
  \draw[fill] (0.8,-2) circle [radius=\rad];
  \draw (0,0) node[above]{$0$} -- (-0.8,-1) node[left]{$1$} -- (-0.8,-2) node[left]{$3$}
        (0,0)  -- (0.8,-1) node[right]{$2$} -- (0.8,-2) node[right]{$4$};
  \node at (0,-4.8){$\kappa=0$};
  \node  at (0,-5.6){$(0,0,2,0)$};
  \end{scope}
\end{tikzpicture}
\caption{The graph $G$ with its $11$ spanning trees and their corresponding
  $\kappa$-numbers and parking functions (as provided by the $\dfs$-burning
  algorithm).  The root vertex is $r=0$.  Each parking function $\pf$ is written
  as vector with $i$-th component $\pf(i)$.}\label{house graph}
\end{figure}
\medskip

\noindent{\em Acknowledgements.}  The authors would like to thank Jim Fix and
Farbod Shokrieh for helpful conversations.  We thank Collin Perkinson for his
comments.  We also thank our anonymous referees for their thoughtful remarks.

\section{Proof of main theorem}\label{main section}

In this section, it is assumed that $G$ is a connected simple graph with
vertices $V=\{0,\dots,n\}$ and fixed root vertex $r\in V$.  We begin by
describing the $\dfs$-burning algorithm (where $\dfs$ stands for ``depth-first
search'').  The idea is to imagine that a fire is started at the root vertex,
$r$, of $G$ and spreads according to a depth-first rule, to be described below,
along the edges until all vertices are burnt.  A nonnegative function $\pf$ is
thought of as an allocation of drops of water to each non-root
vertex\footnote{In~\cite{BakerShokrieh}, ``firefighters'' play the role of our
drops of water.}.  Suppose fire travels along an edge $e$ to a vertex $v$.  If
there are still drops of water on $v$, one drop will be used to ``dampen'' $e$,
thus protecting $v$ from the flame, and the search backtracks.  The depth-first
rule is as follows: if there is no remaining water at $v$, then $e$ is
marked,~$v$ is burnt, and the fire proceeds from $v$ along an edge {\em to the
  largest unburnt neighboring vertex}.  In the end, either (i) all vertices are
  burnt, $\pf$ is a parking function, and the collection of marked edges forms a
  spanning tree, $T_{\pf}$, (and the number of dampened edges is $\deg\pf$) or
  (ii) the nonempty set of unburnt vertices,~$S$, serves as a certificate that
  $\pf$ is not a parking function: $\pf(j)\geq\deg_{S^c}(j)$ for each $j\in S$.
  Algorithm~\ref{dfs-burning} provides a precise statement of the
  $\dfs$-algorithm.  Its validity is established in Theorem~\ref{dfs-bijection}.

\begin{example} Figure~\ref{burn-example} illustrates a running of the
  $\dfs$-burning algorithm.  Vertex $i$ is labeled $v_i$.  The value of the
  parking function $\pf$ at $v_i$ is the $i$-th component of the vector
  $(0,0,1,0)$.  The root vertex, $r=0=v_0$, is lit and fire spreads along the edge
  to the highest-numbered adjacent vertex,~$v_2$.  Since there are no drops of
  water on $v_2$, i.e., since $\pf(v_2)=0$, the vertex $v_2$ is burnt and
  $(v_0,v_2)$ is added to the list of tree edges.  Similarly, the fire spreads
  from $v_2$, causing $v_4$ to be burnt and $(v_2,v_4)$ to become a tree edge.
  The fire them attempts to spread to $v_3$, but the drop of water there is used
  to dampen the edge $(v_4,v_3)$.  Backtracking to $v_2$, the fire then spreads
  to the remaining vertices.  Note that the number of dampened edges is
  $\deg\pf$.
  
  The resulting spanning tree, $\phi(\pf)$, has inversions $(v_2,v_1)$ and
  $(v_3,v_1)$, but only $(v_2,v_1)$ is a $\kappa$-inversion since the parent of
  $v_3$ in the tree is $v_2$ and $\{v_1,v_2\}$ is not an edge in the graph.  In
  accordance with Theorem~\ref{main}, we have $g - \deg\pf = 2 - 1 = 1$.  
\end{example}

\begin{figure}[ht]
\def\nz{1pt}  
\begin{tikzpicture} [scale=0.35]
  \begin{scope}[shift={(0,0)}]
    \node[draw,shape=circle,inner sep=\nz,label=above:$v_4$] (v4) at (0,5) {$0$};
    \node[draw,shape=circle,inner sep=\nz,label=right:$v_3$] (v3) at (2,3) {$1$};
    \node[draw,shape=circle,inner sep=\nz,label=left:$v_2$] (v2) at (-2,3) {$0$};
    \node[draw,shape=circle,inner sep=\nz,label=right:$v_1$] (v1) at (2,0) {$0$};
    \node[draw,shape=circle,inner sep=\nz,minimum size=12pt,label=left:$v_0$] (v0) at (-2,0) {$\ast$};
    \draw (v0)--(v1)--(v3)--(v2)--(v4)--(v3);
    \draw (v0)--(v2);
  \end{scope}
  \node at (6,1.8) {$\Longrightarrow$};
  \begin{scope}[shift={(12,0)}]
    \node[draw,shape=circle,inner sep=\nz,label=above:$v_4$] (v4) at (0,5) {$0$};
    \node[draw,shape=circle,inner sep=\nz,label=right:$v_3$] (v3) at (2,3) {$1$};
    \node[draw,shape=circle,inner sep=\nz,minimum size=12pt,label=left:$v_2$] (v2) at (-2,3) {$\ast$};
    \node[draw,shape=circle,inner sep=\nz,label=right:$v_1$] (v1) at (2,0) {$0$};
    \node[draw,shape=circle,inner sep=\nz,minimum size=12pt,label=left:$v_0$] (v0) at (-2,0) {$\ast$};
    \draw (v0)--(v1)--(v3)--(v2)--(v4)--(v3);
    \draw[->,ultra thick] (v0)--(v2);
  \end{scope}
  \node at (18,1.8) {$\Longrightarrow$};
  \begin{scope}[shift={(24,0)}]
    \node[draw,shape=circle,inner sep=\nz,minimum size=12pt,label=above:$v_4$] (v4) at (0,5) {$\ast$};
    \node[draw,shape=circle,inner sep=\nz,label=right:$v_3$] (v3) at (2,3) {$1$};
    \node[draw,shape=circle,inner sep=\nz,minimum size=12pt,label=left:$v_2$] (v2) at (-2,3) {$\ast$};
    \node[draw,shape=circle,inner sep=\nz,label=right:$v_1$] (v1) at (2,0) {$0$};
    \node[draw,shape=circle,inner sep=\nz,minimum size=12pt,label=left:$v_0$] (v0) at (-2,0) {$\ast$};
    \draw (v0)--(v1)--(v3)--(v2);
    \draw[->,ultra thick] (v2)--(v4);
    \draw (v4)--(v3);
    \draw[->,ultra thick] (v0)--(v2);
  \end{scope}
  \node[rotate=90] at (24,-2.5) {$\Longleftarrow$};
  \begin{scope}[shift={(24,-11)}]
    \node[draw,shape=circle,inner sep=\nz,minimum size=12pt,label=above:$v_4$] (v4) at (0,5) {$\ast$};
    \node[draw,shape=circle,inner sep=\nz,label=right:$v_3$] (v3) at (2,3) {$0$};
    \node[draw,shape=circle,inner sep=\nz,minimum size=12pt,label=left:$v_2$] (v2) at (-2,3) {$\ast$};
    \node[draw,shape=circle,inner sep=\nz,label=right:$v_1$] (v1) at (2,0) {$0$};
    \node[draw,shape=circle,inner sep=\nz,minimum size=12pt,label=left:$v_0$] (v0) at (-2,0) {$\ast$};
    \draw (v0)--(v1)--(v3)--(v2);
    \draw[->,ultra thick] (v2)--(v4);
    \draw[dashed,thick] (v4)--(v3);
    \draw[->,ultra thick] (v0)--(v2);
  \end{scope}
  \node at (18,-9.5) {$\Longleftarrow$};
  \begin{scope}[shift={(12,-11)}]
    \node[draw,shape=circle,inner sep=\nz,minimum size=12pt,label=above:$v_4$] (v4) at (0,5) {$\ast$};
    \node[draw,shape=circle,inner sep=\nz,minimum size=12pt,label=right:$v_3$] (v3) at (2,3) {$\ast$};
    \node[draw,shape=circle,inner sep=\nz,minimum size=12pt,label=left:$v_2$] (v2) at (-2,3) {$\ast$};
    \node[draw,shape=circle,inner sep=\nz,label=right:$v_1$] (v1) at (2,0) {$0$};
    \node[draw,shape=circle,inner sep=\nz,minimum size=12pt,label=left:$v_0$] (v0) at (-2,0) {$\ast$};
    \draw (v0)--(v1)--(v3);
    \draw[->,ultra thick] (v2)--(v3);
    \draw[->,ultra thick] (v2)--(v4);
    \draw[dashed,thick] (v4)--(v3);
    \draw[->,ultra thick] (v0)--(v2);
  \end{scope}
  \node at (6,-9.5) {$\Longleftarrow$};
  \begin{scope}[shift={(0,-11)}]
    \node[draw,shape=circle,inner sep=\nz,minimum size=12pt,label=above:$v_4$] (v4) at (0,5) {$\ast$};
    \node[draw,shape=circle,inner sep=\nz,minimum size=12pt,label=right:$v_3$] (v3) at (2,3) {$\ast$};
    \node[draw,shape=circle,inner sep=\nz,minimum size=12pt,label=left:$v_2$] (v2) at (-2,3) {$\ast$};
    \node[draw,shape=circle,inner sep=\nz,minimum size=12pt,label=right:$v_1$] (v1) at (2,0) {$\ast$};
    \node[draw,shape=circle,inner sep=\nz,minimum size=12pt,label=left:$v_0$] (v0) at (-2,0) {$\ast$};
    \draw (v0)--(v1);
    \draw[->,ultra thick] (v3)--(v1);
    \draw[->,ultra thick] (v2)--(v3);
    \draw[->,ultra thick] (v2)--(v4);
    \draw[dashed,thick] (v4)--(v3);
    \draw[->,ultra thick] (v0)--(v2);
  \end{scope}
\end{tikzpicture}
\caption{An application of the $\dfs$-burning algorithm. Tree edges are arrows,
  dampened edges are dashed, and asterisks denote burnt vertices.  To avoid
  confusion, vertex $i$ is labeled $v_i$.  The root vertex is $r=v_0$.  The
  values of the parking function are indicated on each
vertex.}\label{burn-example}
\end{figure}
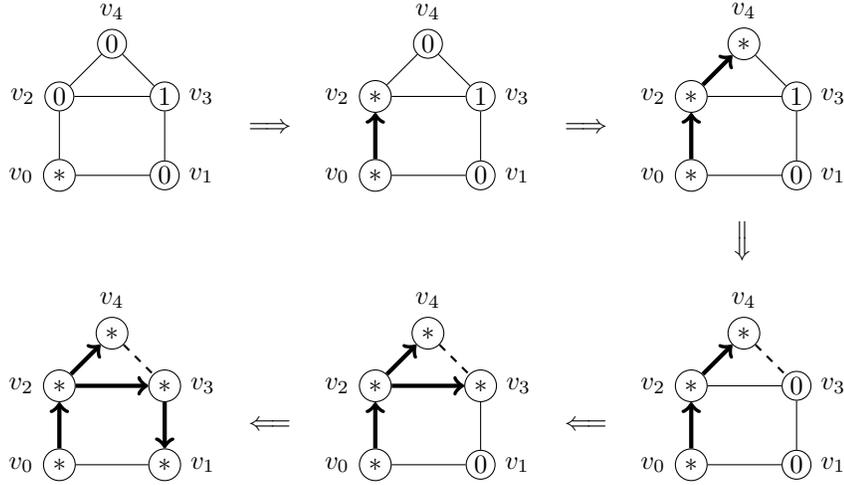
\medskip

{\small
\begin{algorithm}[ht]
\caption{ {\bf $\dfs$-burning algorithm}.}\label{dfs-burning}
\begin{algorithmic}[1]
  \Statex
  \Statex{\hspace{-0.5cm}\sc algorithm}
  \Statex{\bf Input:} $\pf\colon V\setminus\{r\}\to\NN$
  \State $\bv =\{r\}$
  \State $\damp = \{\,\}$
  \State $\tree= \{\,\}$
  \State execute {\sc dfs\_from}($r$)
  \Statex{\bf Output:} {$\bv$ and $\tree$}
  \Statex \rule{0.5\textwidth}{.4pt}
  \Statex{\hspace{-0.5cm}\sc auxillary function}
    \Function{\sc dfs\_from}{$i$} 
      \ForAll{$j$ adjacent to $i$ in $G$, from largest numerical value to smallest}
	\If{$j\notin\bv$}
	  \If{$\pf(j)=0$}
	    \State append $j$ to $\bv$
	    \State append $(i,j)$ to $\tree$
	    \State {\sc dfs\_from}$(j)$
	  \Else
	     \State $\pf(j) = \pf(j)-1$
	     \State append $(i,j)$ to $\damp$
	  \EndIf
       \EndIf	
     \EndFor
   \EndFunction
  \end{algorithmic}
\end{algorithm}
}
{\small
\begin{algorithm}[ht]
\caption{ {\bf Tree to parking function algorithm}.}\label{inverse}
\begin{algorithmic}[1]
  \Statex
  \Statex{\hspace{-0.5cm}\sc algorithm}
  \Statex{\bf Input:} Spanning tree $T$ rooted at $r$ with edges directed away
  from root.
  \State $\bv =\{r\}$
  \State $\damp = \{\,\}$
  \State $\pf=0$\ \ \ // (the $0$-function on the non-root vertices of $G$)
  \State execute {\sc tree\_from$(r)$}
  \Statex{\bf Output:} {$\pf\colon V\setminus\{r\}\to\NN$}
  \Statex \rule{0.5\textwidth}{.4pt}
  \Statex{\hspace{-0.5cm}\sc auxillary function}
  \Function{\sc tree\_from}{$i$} 
    \ForAll{$j$ adjacent to $i$ in $G$, from largest numerical value to smallest}
      \If{$j\notin\bv$}
	\If{$(i,j)$ is an edge of $T$}
	    \State append $j$ to $\bv$
	    \State {\sc tree\_from}$(j)$
	 \Else
	   \State $\pf(j) = \pf(j)+1$
	   \State append $(i,j)$ to $\damp$
	 \EndIf
      \EndIf
    \EndFor
  \EndFunction 
  \end{algorithmic}
\end{algorithm}
}
\begin{thm}[$\dfs$-bijection]\label{dfs-bijection}
After applying the $\dfs$-burning algorithm to $\pf\colon
V\setminus\{r\}\to\NN$, if\ \,{\em $\bv=V$}, then $\pf$ is a parking function
for $G$ and\ {\em $\tree$} forms a spanning tree of $G$.  If {\em $\bv\neq V$},
the nonempty set {\em $S:=V\setminus\bv$} has the property that
$\pf(j)\geq\deg_{S^c}(j)$ for all $j\in S$, certifying that $\pf$ is not a
parking function for $G$.  

Associating to each parking function the spanning tree produced by the
$\dfs$-burning algorithm defines a bijection
\[
 \phi\colon\gpf\to\spt.
\]
The inverse is provided by Algorithm~\ref{inverse}.
\end{thm}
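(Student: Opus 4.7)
The plan is to proceed in three stages matching the three assertions of the theorem. First, a routine induction on the order of appendings shows that \tree{} always forms a rooted tree directed away from $r$ and spanning $\bv$: whenever a vertex $j$ is appended to $\bv$, the edge $(i,j)$ simultaneously appended to \tree{} has its tail $i$ already in $\bv$. In particular, if $\bv = V$ at termination, then \tree{} is a spanning tree of $G$ rooted at $r$.

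Next, I verify the dichotomy at termination. Suppose $\bv\neq V$ and set $S:=V\setminus\bv$. For each edge $\{i,j\}$ with $i\in S^c$ and $j\in S$, the call \textsc{dfs\_from}$(i)$ iterates once through the neighbors of $i$, and when the loop reaches $j$ the condition $j\notin\bv$ holds because $j$ is never burnt; the branch therefore either burns $j$ or dampens $(i,j)$. A burn would contradict $j\in S$, so the edge must be dampened, decrementing $\pf(j)$. Each dampening requires $\pf(j)>0$ beforehand, and there are $\deg_{S^c}(j)$ such edges incident to $j$, so the original value of $\pf(j)$ is at least $\deg_{S^c}(j)$, exhibiting $S$ as the certificate required by Definition~\ref{parking-function}. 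Conversely, if $\bv=V$, take any nonempty $S\subseteq V\setminus\{r\}$ and let $i$ be the first vertex of $S$ to burn; prior attempts on $i$ all originate from neighbors in $S^c$ and drain $\pf(i)$ to zero, while the burning step itself consumes one further, distinct edge from $S^c$. Hence $\deg_{S^c}(i)\geq\pf(i)+1>\pf(i)$, so $\pf$ is a parking function.

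Bijectivity follows from a coupling lemma: given $\pf\in\gpf$ and $T:=\phi(\pf)$, running Algorithm~\ref{inverse} on $T$ executes exactly the same DFS traversal as the DFS-burning algorithm ran on $\pf$, examining the same edges in the same order and making the same burn/dampen choice at each step. I prove this by induction on step number. At each step both algorithms sit at the same vertex $i$ and examine the same next neighbor $j$, since neighbors are traversed in decreasing order and $\bv$ coincides by induction. For $j\notin\bv$, Algorithm~\ref{dfs-burning} burned $j$ precisely when the current value of $\pf(j)$ had been decremented to zero, which by construction of $T$ is exactly when $(i,j)\in T$---the criterion used by Algorithm~\ref{inverse}. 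The two dampening sequences thus agree edge-by-edge, and Algorithm~\ref{inverse}'s output at each non-root $j$ is the number of dampenings on $j$, which equals the original value of $\pf(j)$. The reverse direction is symmetric: given $T\in\spt$, Algorithm~\ref{inverse} yields a function $\pf$, and running the DFS-burning algorithm on $\pf$ re-executes the same traversal by the same coupling, so $\bv=V$ (whence $\pf\in\gpf$ by the previous paragraph) and the tree output is exactly $T$.

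The main obstacle is the coupling induction: the two algorithms use logically different rules at each edge---a numerical test on the evolving $\pf$ versus a fixed membership test against $T$---and verifying that they make identical choices at every step requires carefully aligning the construction of $T$ (as the set of edges on which burning originally occurred) with the arithmetic of the decrements of $\pf$. Once this identification is in place, the remainder of the argument reduces to bookkeeping.
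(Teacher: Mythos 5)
Your proposal is correct and takes essentially the same approach as the paper: your first two stages reproduce the paper's spanning-tree and certificate arguments (first-burnt-vertex of $S$, dampened edges from $S^c$), and your coupling lemma is exactly the paper's key claim that the marked edge-lists $L(\pf_T)$ and $L(T)$ produced by the two algorithms coincide. The only difference is mechanical: the paper proves trace-equality by a first-point-of-difference contradiction using the counters $d(\pf,j)$, $d(T,j)$ and then separately shows $\phi$ is injective, whereas you prove the same trace-equality by forward induction on steps (using that each directed edge is examined at most once, so the burn test $\pf(j)=0$ aligns with the membership test $(i,j)\in T$) and verify directly that both composites of $\phi$ and $T\mapsto\pf_T$ are identities.
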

\begin{proof}
  The $\dfs$-burning algorithm must terminate
  since the function {\sc dfs\_from()} is called at most once per vertex and
  the loop starting at line~10 then eventually considers (perhaps after
  backtracking from a later call to {\sc dfs\_from()}) each adjacent vertex
  exactly once.  Algorithm~\ref{inverse} terminates for similar reasons.
  
  After applying the $\dfs$-burning algorithm to $\pf$, if every vertex is
  burnt, then a collection of $n$ edges is returned.   These edges form a
  connected subgraph of~$G$ containing all $n+1$ vertices and hence is a
  spanning tree.  To see that in this case $\pf$ is a parking function for $G$,
  suppose $S$ is a nonempty subset of $V\setminus\{r\}$.  Suppose that $j$ is the first
  vertex of $S$ to be burnt.  Just before $j$ is added to the list of burnt
  vertices (line~13), $\pf(j)$ edges incident on $j$ will have been already
  added to the list of dampened edges.  Then, just after $j$ is burnt, a new
  edge incident on $j$ is added to the list of tree edges.  Each of these edges
  has the form $(i,j)$ where $i$ is a vertex burnt prior to $j$, and hence
  $i\notin S$.  This shows that $\pf(j)<\deg_{S^c}(j)$, as required.
  
  Now suppose that when the algorithm terminates, not every vertex is burnt.
  Let $S$ be the nonempty set of unburnt vertices.  Take $j\in S$, and consider
  the set $I$ of vertices adjacent to $j$ but not in $S$.  For each $i\in I$,
  the algorithm at some point added the edge $(i,j)$ to the set of dampened
  edges and decreased $\pf(j)$ by one (while maintaining its nonnegativity).
  Thus, $\pf(j)\geq\deg_{S^c}(j)$ (for the original input function $\pf$), which
  shows that $\pf$ is not a parking function for $G$.  

We now show that the mapping $\phi$ is bijective with inverse provided by
Algorithm~\ref{inverse}.  Consider the execution of the $\dfs$-burning algorithm
with input $\pf$.  When the for-loop at line~6 is entered, if $j$ is not a burnt
vertex, an edge $e=(i,j)$ is added either to $\tree$ or to $\damp$.  Make note
of which case occurs, and in this way, create an ordered list $L(\pf)$ of
directed edges, each edge marked as either a tree edge or dampened edge.  The
same edge may appear multiple times marked as dampened but appears at most once
as a tree edge.  Once an edge is marked as a tree edge, it will never appear
later in the list with either marking.  For each non-root vertex $j$, let
$d(\pf,j)$ denote the number of times~$j$ appears as the head of a dampened edge
in the list~$L(\pf)$. Then $\pf(j)\geq d(\pf,j)$ for all $j$ and
$\pf(j)=d(\pf,j)$ if $j$ appears as the head of a tree edge in $L(\pf)$. The
function~$\pf$ is a parking function if and only if each non-root vertex appears
as the head of tree edge in the list, in which case the edges marked as tree
edges form a spanning tree and $\pf(j)=d(\pf,j)$ for each non-root vertex~$j$.

For each $T\in\spt$ create a similar list $L(T)$ of marked edges using
Algorithm~\ref{inverse}. When a directed edge $e=(i,j)$ is considered in the
for-loop, it is either part of $T$---in which case, mark~$e$ as a tree edge---or
it is added to $\damp$, in which case, mark it as a dampened edge. In the end,
each non-root vertex appears as the head of a tree edge exactly once, and that
edge never appears subsequently in the list with either marking.  Denote the
output of the algorithm as $\pf_{T}$.  For each non-root vertex $j$, define
$d(T,j)$ as above to be the number of times $j$ appears as the head of a dampened
edge in~$L(T)$, and note that $\pf_T(j)=d(T,j)$.

For each $T\in\spt$, we claim $L(\pf_T)=L(T)$.  If not, consider the first entry
at which the two lists differ.  This entry must consist of the same edge, say
$e=(i,j)$, but with different markings.  First, suppose $e$ is marked as a tree
edge in $L(T)$ and as a dampened edge in $L(\pf_T)$.  In that case, we get the
contradiction:
\[
\pf_T(j)\geq d(\pf_T,j) > d(T,j)= \pf_T(j).
\]
Second, suppose $e$ is marked as a dampened edge in $L(T)$ and as a tree edge in
$L(\pf_T)$.  In that case, we get the contradiction:
\[
\pf_T(j)=d(\pf_T,j) < d(T,j)= \pf_T(j).
\]
Since $L(\pf_T)=L(T)$, it follows that $\phi(\pf_T)=T$ and, thus, $\pf_T$ is a
parking function.  In particular, the mapping $T\to\pf_T$ is injective.

To see that $T\to\pf_T$ is the inverse of $\phi$, it now suffices to show $\phi$
is injective.  For each $\pf\in\gpf$, we have seen that $\pf$ and $\phi(\pf)$
are determined by $L(\pf)$.  Suppose $\pf,\pf'\in\gpf$ and $\pf\neq \pf'$.  It
follows that $L(\pf)\neq L(\pf')$.
Consider the first entry in which the lists differ.  This entry is an edge $e=(i,j)$
marked as a tree edge in one list, say in $L(\pf)$, and a dampened edge in the
other, $L(\pf')$.  If follows that $e$ appears in $\phi(\pf)$ but not in
$\phi(\pf')$.  Thus, $\phi$ is injective.
\end{proof}

We proceed to a proof of our main result, Theorem~\ref{main}.
\begin{defn}
  The {\em depth-first search tree} ({\em $\dfs$-tree}) of $G$, denoted $\dfs(G)$, is the output of the
  $\dfs$-burning algorithm with input $\pf=0$.
\end{defn}

\begin{lemma}\label{main lemma} Suppose $H$ is a connected graph obtained by
  deleting an edge of $\dfs(G)$ from $G$.  Then the $\kappa$-inversions of
  $\dfs(H)$ as a subgraph of $G$ are the same as those as a subgraph of $H$
  (assuming the same root for both $G$ and $H$), and
  \[
\kappa(G,\dfs(H))=\kappa(H,\dfs(H))=g-1,
  \]
  where $g=|E|-|V|+1=|E|-n$.
\end{lemma}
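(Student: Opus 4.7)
The plan is to split the lemma into two pieces: a general identity $\kappa(X,\dfs(X))=g(X)$ for any connected simple graph $X$ with root $r$ and vertex set $\{0,\dots,n\}$ (so that taking $X=H$ gives $\kappa(H,\dfs(H))=g-1$), and an extra-edge argument showing that viewing $\dfs(H)$ inside $G$ rather than $H$ contributes no new $\kappa$-inversions.

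For the general identity, I would exhibit a bijection between the non-tree edges of $\dfs(X)$ in $X$ (of which there are $g(X)$) and the $\kappa$-inversions of $\dfs(X)$. Given a non-tree edge $\{u,w\}$, the standard undirected-DFS fact that every non-tree edge is a back edge lets me take $u$ to be a proper ancestor of $w$. Let $v$ be the child of $u$ on the tree path from $u$ to $w$. Then $(v,w)$ is a $\kappa$-inversion once $v>w$, and this inequality follows from the DFS rule: at $u$ we iterate neighbors from largest to smallest, and $w$ gets burnt inside the recursion from $v$, so $u$ processed $v$ before it later encountered the (now burnt) $w$. The inverse of this map sends a $\kappa$-inversion $(i,j)$ to the non-tree edge $\{\mathrm{parent}(i),j\}$, which is genuinely non-tree because otherwise $i$ and $j$ would be siblings, contradicting $i$'s being a proper ancestor of $j$.

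For the extra-edge argument, let $e=\{a,b\}$ be the deleted edge, with $a$ the parent of $b$ in $\dfs(G)$. Any $\kappa$-inversion present in $G$ but not in $H$ has the form $(i,j)$ with $\{\mathrm{parent}_{\dfs(H)}(i),j\}=\{a,b\}$, and I would rule out both orientations. The case $\mathrm{parent}_{\dfs(H)}(i)=a$, $j=b$ demands a child $i>b$ of $a$ in $\dfs(H)$ whose subtree contains $b$; the other case demands $a$ to be a descendant of $b$ in $\dfs(H)$. Both are blocked by the following ``lockstep'' observation, which I expect to be the main technical point: the DFS algorithm touches the edge $\{a,b\}$ only when one of its endpoints meets the other in its neighbor iteration, so $\dfs(G)$ and $\dfs(H)$ run identically up through the moment $a$ first considers $b$, and the recursions from $a$'s children with label $>b$---during which $a$ is burnt, so $\{a,b\}$ is invisible to DFS in either graph---have already produced identical subtrees in $\dfs(G)$ and $\dfs(H)$. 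From this, $a$ is burnt strictly before $b$ in $\dfs(H)$, so $b$ cannot be an ancestor of $a$ there, ruling out the second orientation; and $b$, a distinct sibling of those high-labeled children of $a$ in $\dfs(G)$, lies outside all their identical subtrees in $\dfs(H)$, ruling out the first.
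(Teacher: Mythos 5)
Your proposal is correct and takes essentially the same approach as the paper: your general identity $\kappa(X,\dfs(X))=g(X)$ is exactly the paper's bijection between non-tree edges and $\kappa$-inversions (which the paper proves directly for $H$), and your ``lockstep'' observation is precisely the paper's point that the searches of $G$ and $H$ coincide until the deleted edge is first touched, which the paper uses to derive the contradiction that the deleted edge could not lie in $\dfs(G)$. The only differences are organizational---you rule out the two orientations of the extra edge explicitly and spell out details (the back-edge fact, the inequality $v>w$, the inverse of the bijection) that the paper leaves implicit.
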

  \begin{proof} Let $T=\dfs(H)$.  Trivially, $\kappa$-inversions of $T$ in $H$
    are $\kappa$-inversions for $T$ in $G$.  We prove the opposite inclusion by
    contradiction.  Suppose $(i,j)$ is a $\kappa$-inversion for $T$ in $G$ but
    not in $H$.  In other words, letting $i'$ be the parent of $i$ in $T$, the
    edge $e=\{i',j\}$ is in $G$ but not in $H$, hence, $e$ is the edge of
    $\dfs(G)$ deleted to obtain $H$.  

  Since $G$ and $H$ differ only in the edge $e$, the depth-first searches of
  both $G$ and $H$ are the same up to the point at which the vertex $i'$ of $e$
  is reached.  Next, since $i>j$, the depth-first search of $G$ travels from
  $i'$ to $i$, i.e., $(i',i)$ is an edge of $\dfs(G)$.  Subsequently, the path
  from $i$ to $j$ in $T$ must also be eventually added to $\dfs(G)$.  Hence, $e$
  cannot be in $\dfs(G)$, which is a contradiction.  

  To show that $\kappa(H,\dfs(H))=g-1$, suppose $e=\{i',j\}$ is an edge of $H$
  but not an edge of $\dfs(H)$.  Without loss of generality, assume $i'$ is
  added to the list of burnt vertices before $j$ in the execution of the
  $\dfs$-burning algorithm used to create $\dfs(H)$.  Since $\{i',j\}$ is not in
  $\dfs(H)$, when the algorithm burns $j$, it has not yet backtracked to $i'$.
  Hence, there is a path in $\dfs(H)$ from $i'$ to $j$ of edges directed away
  from the root.  If $i$ is the child of $i'$ in this path, then $(i,j)$ is a
  $\kappa$-inversion for $\dfs(H)$.  In this way, we get a bijection between
  edges of $H$ that are not edges of $\dfs(H)$ and $\kappa$-inversions of
  $\dfs(H)$.  The result follows. 
\end{proof}

\begin{proof}[\it Proof of Theorem~\ref{main}]  Let $\pf\in\gpf$.  It remains to
  be shown that $\kappa(G,\phi(\pf))=g-\deg\pf$.
  
  Let $D=\{e_1,\dots,e_k\}$ be the dampened edges resulting from applying the
  $\dfs$-burning algorithm to~$\pf$.  We assume that these edges are listed in
  the order they were found by the algorithm, and note that $k=\deg\pf$.  Define
  $G_0:=G$, and for $\ell=1,\dots, k$, let $G_{\ell}$ be the graph obtained from
  $G_{\ell-1}$ by removing edge~$e_{\ell}$.  Each $G_i$ contains $\phi(\pf)$ and
  is consequently connected.  Further, for $G_k$, obtained from $G$ by removing
  all the dampened edges, we have $\dfs(G_k)=\phi(\pf)$. 
  
  We now show that $e_{\ell}\in\dfs(G_{\ell-1})$ for $0\leq \ell<k$, from which
  the result follows by repeated application of Lemma~\ref{main lemma}.  The key
  idea is that starting with {\em any} connected simple graph and {\em any}
  nonnegative function on the graph's non-root vertices, the first dampened edge
  created by the $\dfs$-burning algorithm is an edge in the graph's depth-first
  search tree.  For example, $e_1$ is an edge of $\dfs(G)=\dfs(G_0)$.  Define
  $\pf_0=\pf$, and for $1\leq\ell<k$, define $\pf_{\ell}\colon
  V\setminus\{r\}\to\NN$ by
\begin{align*}
  \pf_{\ell}(j) = 
 \begin{cases}
   \pf_{\ell-1}(j)-1&\text{if $e_{\ell}=(i,j)$},\\
   \pf_{\ell-1}(j)&\text{otherwise}.\\
 \end{cases}
\end{align*}
Then $e_{\ell+1}$ is in $\dfs(G_{\ell})$ since it is the first dampened edge when
the algorithm is run with input $G_{\ell}$ and $\pf_{\ell}$ (the full sequence
of dampened edges being $e_{\ell+1},\dots,e_k$).  
\end{proof}

\begin{cor} $\kappa(G,\dfs(G))=g$.
\end{cor}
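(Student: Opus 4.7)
The plan is to derive this as an immediate specialization of Theorem~\ref{main} applied to the zero parking function. The main theorem asserts $\kappa(G,\phi(\pf)) = g - \deg\pf$ for every $\pf\in\gpf$, and by definition $\dfs(G) = \phi(0)$, i.e., the DFS-tree is the image under $\phi$ of the zero function on $V\setminus\{r\}$. Since $\deg(0) = 0$, the conclusion $\kappa(G,\dfs(G)) = g$ would be immediate once we know that $0$ is a parking function for $G$.

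So the only thing to verify is that the constant function $\pf = 0$ lies in $\gpf$, which I would obtain by appealing to Theorem~\ref{dfs-bijection}. First I would observe that when the $\dfs$-burning algorithm is run on input $\pf = 0$, the branch at line~8 is always taken (since $\pf(j) = 0$ whenever encountered), so no edges are ever dampened and the auxiliary function \textsc{dfs\_from} reduces to an ordinary depth-first search on $G$. Since $G$ is connected, this DFS visits every vertex, so at termination $\bv = V$. By the first part of Theorem~\ref{dfs-bijection}, this certifies that $\pf = 0$ is a parking function for $G$, and the edges recorded in \tree\ form exactly $\dfs(G)$.

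Combining these observations, $\kappa(G,\dfs(G)) = \kappa(G,\phi(0)) = g - \deg(0) = g$. There is no substantive obstacle; the only subtlety is making explicit that the input $\pf=0$ indeed produces a complete burn on a connected graph, so that Theorem~\ref{main} is genuinely applicable. Alternatively, one could skip the main theorem entirely and invoke Lemma~\ref{main lemma}: since $\dfs(G)$ itself is obtained without any dampened edges (i.e., the sequence $D$ in the proof of Theorem~\ref{main} is empty for $\pf = 0$), the lemma's counting argument—bijecting the $g$ non-tree edges of $G$ with the $\kappa$-inversions of $\dfs(G)$—gives the conclusion directly. Either route yields a one-line proof.
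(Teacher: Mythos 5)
Your proposal is correct and takes essentially the same route as the paper, whose entire proof is ``Apply Theorem~\ref{main} to $\pf=0$.'' Your extra verification that the zero function lies in $\gpf$ (via a complete burn of the connected graph $G$, per Theorem~\ref{dfs-bijection}) just makes explicit a detail the paper leaves implicit, and your alternative via Lemma~\ref{main lemma} is a valid shortcut but not needed.
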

\begin{proof} Apply Theorem~\ref{main} to $\pf=0$.
\end{proof}

\section{Threshold graphs}\label{threshold graphs} Threshold graphs are a family of graphs, including
the complete graphs, introduced by Chvatal and Hammer~\cite{Chvatal}.  For a
comprehensive study, see~\cite{Mahadev}.  We show that if properly labeled,
there is no distinction between inversions and $\kappa$-inversions for their
spanning trees.

\begin{defn} A graph is a {\em threshold graph} if it can be constructed from
  a graph with one vertex and no edges by repeatedly carrying out the following two steps:
\begin{itemize}
  \item Add a {\em dominating vertex}: a vertex that is connected to every other
    existing vertex.
  \item Add an {\em isolated vertex}: a vertex that is not connected to any
    other existing vertex.
\end{itemize}
\end{defn}

A threshold graph with more than one vertex is connected if and only if the
last-added vertex is dominating.  Each threshold graph is uniquely defined by
its {\em build sequence}: a string starting with the symbol $\ast$ (for the
initial vertex) followed by any string consisting of the letters {\bf d} (for
the addition of a dominating vertex) and {\bf i} (for the addition of an
isolated vertex).  Thus, $\ast${\bf iddid} describes the threshold graph
pictured in Figure~\ref{threshold} formed from a single vertex by adding, in
order, an isolated vertex, two dominating vertices, an isolated vertex, then a
final dominating vertex.
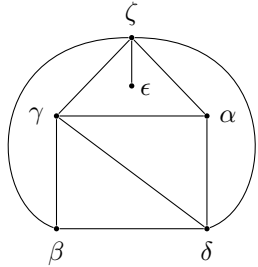
\begin{figure}[ht]
\begin{tikzpicture} [my_node/.style={fill, circle, inner sep=0pt,minimum size=2pt}, scale=0.5]
\centering                                 
\draw (0.37,3.7) node {$\epsilon$};
\node[my_node] (v5) at (0,3.8) {};
\node[my_node,above,label=above:$\zeta$] (v4) at (0,5) {};
\node[my_node,label=right:$\alpha$] (v3) at (2,3) {};
\node[my_node,label=left:$\gamma$] (v2) at (-2,3) {};
\node[my_node,label=below:$\delta$] (v1) at (2,0) {};
\node[my_node,label=below:$\beta$] (v0) at (-2,0) {};
\draw (v0)--(v1)--(v3)--(v2)--(v4)--(v3);
\draw (v1)--(v2);
\draw (v0)--(v2);
\draw (v4) .. controls +(180:4) and +(160:2) .. (v0);
\draw (v4) .. controls +(0:4) and +(20:2) .. (v1);
\draw (v4)--(v5);
\end{tikzpicture}
\caption{The threshold graph with build sequence $\ast${\bf iddid}.  The vertices are
labeled $\alpha,\beta,\gamma,\delta,\epsilon,\zeta$, in the order of the build
sequence.  }\label{threshold}.
\end{figure}
Omitting $\ast$ and reading left-to-right, group the consecutive sequences
consisting entirely of a single letter (either {\bf d} or {\bf i}) into {\em
blocks}, then include $\ast$ in the first block.  Thus, the sequence of blocks
for $\ast${\bf iddid} is [$\ast${\bf i}], [{\bf dd}], [{\bf i}], [{\bf d}]. 

We say that a threshold graph is {\em labeled by reverse degree sequence} if its
vertices are labeled by $0,\dots, n$ in such a way that $\deg(i)\geq\deg(j)$ for
each pair of vertices $i<j$.  If there is more than one vertex with the same
degree, the labeling is not unique.  For example, for the graph in
Figure~\ref{threshold}, (i)~$\zeta$ must be labeled $0$, (ii) $\gamma,\delta$
must be labeled $1,2$ in either order, (iii) $\alpha, \beta$ must be labeled
$3,4$ in either order, and (iv) $\epsilon$ must be labeled $5$.

\begin{prop}\label{threshold-inversion} Let $G$ be a connected threshold graph labeled by reverse degree
  sequence, and let $T$ be a spanning tree of $G$.  Then every inversion of $T$
  is a $\kappa$-inversion.
\end{prop}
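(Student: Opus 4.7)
The plan is to reduce the statement to the \emph{nesting property} of neighborhoods in threshold graphs: if $G$ is a threshold graph labeled by reverse degree sequence and $a < b$, then $N(b) \setminus \{a\} \subseteq N(a) \setminus \{b\}$. Because $G$ is connected, the final step of its build sequence must add a dominating vertex; this vertex has strictly maximum degree and therefore receives label $0$. I will take the root to be $r = 0$; this convention is in fact necessary, since otherwise $(r,j)$ with $j < r$ would be an inversion that fails to be a $\kappa$-inversion.

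Given the nesting property, the argument is short. Let $(i,j)$ be an inversion, so $i > j$ and $i$ is a proper ancestor of $j$ in $T$. Since $i > j \ge 0 = r$ we have $i \ne r$, and so $i$ has a parent $i'$ in $T$. The vertex $i'$ is a proper ancestor of $i$, hence of $j$, so in particular $i' \ne j$. The tree edge $\{i',i\}$ puts $i' \in N(i)\setminus\{j\}$; the nesting property applied with $a=j$ and $b=i$ gives $N(i)\setminus\{j\} \subseteq N(j)\setminus\{i\}$, so $i' \in N(j)$. This is precisely the adjacency needed to upgrade $(i,j)$ to a $\kappa$-inversion.

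The main step is thus to verify the nesting property, which I would do by induction on the build sequence. The base case (single vertex) is trivial. For the inductive step, examine the last added vertex $w$. If $w$ is dominating, it has strictly maximum degree and must be labeled $0$; deleting it yields a threshold graph on $\{1,\dots,n\}$ whose labeling is still a reverse degree sequence because every other degree drops by exactly one. Nesting among pairs not involving $0$ then follows by induction, while pairs involving $0$ are immediate since $0$ is adjacent to every other vertex. The isolated case is symmetric: the new vertex is forced to the maximum label and has empty neighborhood. The main subtlety is that the reverse-degree-sequence labeling is not unique when degrees coincide, but in a threshold graph two vertices of equal degree satisfy $N(u)\setminus\{v\} = N(v)\setminus\{u\}$, so the nesting property does not depend on the choice of labeling.
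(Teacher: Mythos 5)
Your proof is correct, but it takes a genuinely different route from the paper's. The paper argues directly from the block structure of the build sequence: it observes that two vertices have equal degree if and only if they lie in the same block (in which case they have the same neighbors apart from each other), and then handles $\deg(i)<\deg(j)$ by a four-way case analysis according to whether $i$ and $j$ were added as dominating or isolated vertices. You instead isolate the nested-neighborhood characterization of threshold graphs --- for $a<b$ in a reverse-degree labeling, $N(b)\setminus\{a\}\subseteq N(a)\setminus\{b\}$ --- prove it by induction on the build sequence, and then the proposition falls out in three lines; your checks that $i'\neq j$ and $i\neq r$ are exactly right, and your explicit observation that the statement silently requires $r=0$ is a point in your favor, since the paper's proof uses this implicitly the moment it takes the parent $i'$ of $i$ (if $r>0$, then $(r,0)$ is an inversion with no candidate parent, and the proposition fails). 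What your approach buys is a reusable, conceptual lemma and a short endgame; what the paper's buys is avoiding induction and extra machinery, staying entirely within the block/degree bookkeeping. One slip to repair: your claim that the last-added dominating vertex has \emph{strictly} maximum degree and ``must be labeled $0$'' is false when other vertices are also universal (e.g., any complete graph, where all degrees coincide), and you invoke it both in the root discussion and inside the induction. Your own closing remark already contains the fix --- equal-degree vertices $u,v$ in a threshold graph satisfy $N(u)\setminus\{v\}=N(v)\setminus\{u\}$, so the nesting property is invariant under relabeling within an equal-degree class, and you may assume without loss of generality that the last-added vertex carries the extreme label --- but you should make that reduction explicit rather than assert strictness; better still, state and prove the lemma in the label-free form $\deg(u)\ge\deg(v)\Rightarrow N(v)\setminus\{u\}\subseteq N(u)\setminus\{v\}$, which sidesteps the non-uniqueness of the labeling entirely, and note that the equal-degree fact you quote needs its own (one-line) proof within the same induction rather than being cited as external.
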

\begin{proof} First note that two vertices have the same degree in $G$ if and
  only if they belong to the same block of the build sequence for $G$. The
  degree of any vertex in a {\bf d}-block is greater than the degree of any
  vertex in an {\bf i}-block.  Also, block-by-block, the degrees for vertices in
  successive {\bf d}-blocks increase from left-to-right, and the degrees for
  {\bf i}-blocks decrease.   
  
  Let $(i,j)$ be an inversion of $T$, and let $i'$ be the parent of $i$.  We
  must show that $\{i',j\}\in E$, where $E$ denotes the set of edges of $G$.
  Since $i>j$ and $G$ is labeled by reverse degree sequence,
  $\deg(i)\leq\deg(j)$.  If $\deg(i)=\deg(j)$, then $i$ and $j$ belong to the
  same block in the build sequence for $G$.  In that case, not counting each
  other, $i$ and $j$ have the same neighbors.  Hence, $\{i',j\}\in E$.
  Otherwise, $\deg(i)<\deg(j)$, and the result follows using the build sequence
  and considering cases:
  \begin{itemize}
    \item If $i$ and $j$ are both dominating, then $\deg(i)<\deg(j)$ implies $j$
      follows $i$ in the build sequence, and thus  $i'$ must be adjacent to $j$.
    \item If $i$ is isolated and $j$ is dominating, then $i'$ must be
      dominating.  So $\{i',j\}\in E$ since every pair of dominating vertices are
      adjacent in $G$.
    \item If both $i$ and $j$ are isolated, then $j$ precedes $i$ in the build
      sequence and $i'$ is a dominating vertex added after $i$.  Hence,
      $\{i',j\}\in E$.
    \item It is not possible for $i$ to be dominating and $j$ to be isolated
      since $\deg(i)<\deg(j)$.
  \end{itemize}
\end{proof}

\begin{rmk}  In light of Proposition~\ref{threshold-inversion}, for a threshold
  graph one may replace $\kappa(G,\phi(\pf))$ in Theorem~\ref{main} by the
  number of inversions of $\phi(\pf)$.  Any labeling of a complete graph is by
  reverse degree sequence since the degree sequence in question is constant.
  Hence, Proposition~\ref{threshold-inversion} applies and shows that
  specializing Theorem~\ref{main} to the case of complete graphs gives a
  solution to a problem posed by Stanley~\cite[Chapter 6,~Exercise 4]{Stanley}.
\end{rmk}
\newpage 

\bibliographystyle{plain}
\bibliography{b5.bib}

\end{document}